\newtheorem{thm}{Theorem}[section] 
\newtheorem{cor}[thm]{Corollary}
\newtheorem{lem}[thm]{Lemma}
\newtheorem{ques}[thm]{Question}
\theoremstyle{definition}
\newtheorem{rem}[thm]{Remark}
\newtheorem{exmpl}[thm]{Example}
\newcommand\operA[2]{{\if!#2!\operatorname{#1}\else{\operatorname{#1}_{#2}^{\phantom{I}}}\fi}} 
\newcommand\Cref[1]{{Corollary~\ref{#1}}}%
\def\tr{{\operatorname{Tr}}}
\def\norm{{\operatorname{Norm}}}
\newcommand{\Trace}[1][]{\if!#1!\operatorname{Tr}\else{\operatorname{Tr}_{#1}^{\phantom{I}}}\fi} 
\long\def\forget#1\forgotten{{}} %
\def\({\left(}
\def\){\right)}
\newif\iffurther
\newif\ifXY 
\def\ps@pprintTitle{%
	\let\@oddhead\@empty
	\let\@evenhead\@empty
	\def\@oddfoot{}%
	\let\@evenfoot\@oddfoot}
\begin{document}

\begin{frontmatter}

\title{Fixed Points of Polynomials over Division Rings}
\author{Adam Chapman}
\ead{adam1chapman@yahoo.com}
\address{Department of Computer Science, Academic College of Tel-Aviv-Yaffo, Rabenu Yeruham St., P.O.B 8401 Yaffo, 6818211, Israel}
\author{Solomon Vishkautsan}
\ead{wishcow@gmail.com}
\address{Department of Computer Science, Tel-Hai Academic College, Upper Galilee, 12208 Israel}

\begin{abstract}
We study the discrete dynamics of standard (or left) polynomials $f(x)$ over division rings $D$. We define their fixed points to be the points $\lambda \in D$ for which $f^{\circ n}(\lambda)=\lambda$ for any $n \in \mathbb{N}$, where  $f^{\circ n}(x)$ is defined recursively by $f^{\circ n}(x)=f(f^{\circ (n-1)}(x))$ and $f^{\circ 1}(x)=f(x)$. Periodic points are similarly defined. We prove that $\lambda$ is a fixed point of $f(x)$ if and only if $f(\lambda)=\lambda$, which enables the use of known results from the theory of polynomial equations, to conclude that any polynomial of degree $m \geq 2$ has at most $m$ conjugacy classes of fixed points. We also consider arbitrary periodic points, and show that in general, they do not behave as in the commutative case. We provide a sufficient condition for periodic points to behave as expected.  
\end{abstract}

\begin{keyword}
Division Rings, Division Algebras, Polynomial Rings, Standard Polynomials, Fixed Points, Periodic Points, Noncommutative Dynamical Systems
\MSC[2010] primary 16S36; secondary 16K20, 37P35, 37C25
\end{keyword}

\end{frontmatter}

\section{Introduction}

Arithmetic dynamics is a growing area of mathematics that has received a significant amount of attention in recent years (see \cite{Silverman:2007}).
The main focus of previous works have been fixed, periodic and pre-periodic points on polynomials and rational functions over fields, and in particular over the rational numbers and $p$-adic numbers.

The goal of this paper is to make the first steps in the direction of arithmetic dynamics in the non-commutative setting. We focus on (standard) polynomials over division rings.
Since polynomials over division rings are not quite functions, and composition is not associative, the definition of fixed points is subtler than in the commutative case (see Section \ref{Fixed}): a polynomial $f(x)$ over a division ring $D$ has a fixed point $\lambda \in D$ if $f^{\circ n}(\lambda)=\underbrace{f \circ \dots \circ f}_{n \ \text{times}}(\lambda)=\lambda$ for any $n \in \mathbb{N}$. The main result of this paper is that $\lambda$ is a fixed point of $f(x)$ if and only if $f(\lambda)=\lambda$ (Corollary~\ref{cor:fixed}), which reduces the problem of finding fixed points to solving a standard polynomial equation, a problem that has been fully solved in the literature when $D$ is a quaternion algebra, and even for more general central division algebras a lot can be said (see \cite{JanovskaOpfer2010} for the case of $D=\mathbb{H}$ and \cite{ChapmanMachen:2017} for a more general treatment).
We also consider arbitrary $r$-periodic points, i.e., points for which $f^{\circ nr}(\lambda)=\lambda$ for any $n \in \mathbb{N}$, and show that in general $f^{\circ r}(\lambda)=\lambda$ does not imply that $\lambda$ is an $r$-periodic point. We are able to prove, however, that the latter implication is true under the condition that $\lambda$ commutes with its orbit (Theorem~\ref{thm:periodic}). 

\section{Division Rings}

An associative unital ring $D$ is a division ring if all nonzero elements in $D$ are invertible.
When $D$ is commutative, it is simply a field, but there are plenty of examples of non-commutative division rings.
The center $F=Z(D)$ of $D$ is nevertheless a field, and $D$ is endowed with the structure of an $F$-vector space.
Therefore, one can consider the dimension $\dim_F D$ of $D$ over $F$.
In case $\dim_F D<\infty$, $D$ is called a ``central division $F$-algebra". In this case, $\dim_F D$ must be $d^2$ for some positive integer $d$, and $D\otimes_F K \cong M_d(K)$ for any maximal subfield $K$ of $D$ (where $M_d(K)$ is the ring of square matrices of order $d$ over the field $K$).
This $d$ is called the ``degree" of $D$ and denoted $\deg D$. 

The simplest kind of central division $F$-algebras are ``quaternion" division $F$-algebras, which are division algebras of degree 2.
In this case, $D=K \oplus K j$ where $K/F$ is a cyclic quadratic field extension with Galois group $\langle \sigma \rangle$, $j^2=\beta \in F^\times$ and $j t=\sigma(t) j$ for any $t \in K$. The nontrivial automorphism $\sigma$ extends to an involution on $D$ defined by $r+tj \mapsto \overline{r+tj}=\sigma(r)-tj$ for any $r,t \in K$.
This involution is called the ``symplectic" or ``canonical" involution of $D$, and is independent of the choice of a cyclic field extension $K/F$ inside $D$. The ``trace" and ``norm" maps for a quaternion algebra are defined by $\tr(z)=z+\overline{z}$ and $\norm(z)=\overline{z}\cdot z$, the first being linear and the second multiplicative. Every element $\lambda \in D$ thus satisfies $\lambda^2-\tr(\lambda)\lambda+\norm(\lambda)=0$. Moreover, the conjugacy class of $\lambda$ (i.e., the set of all elements $t$ of the form $\mu \lambda \mu^{-1}$) coincides with the elements $t$ satisfying the identity $t^2-\tr(\lambda)t+\norm(\lambda)=0$. For further reading see \cite{BerhuyOggier2013} and \cite{BOI}.

If one takes $F=\mathbb{R}$, $K=\mathbb{C}$ and $\beta=-1$, then the quaternion algebra obtained in this process is Hamilton's quaternion algebra $\mathbb{H}$, which happens to be the unique central division algebra over the field of real numbers.
\section{The Ring of Polynomials} \label{sec:ring-of-polynomials}

Let $D$ be a division ring with center $F$.
The ring of ``standard" polynomials $D[x]$ is defined to be $D \otimes_F F[x]$, i.e., the scalar extension of $D$ to the polynomial ring in one variable over $F$.
The variable $x$ is thus central in $D[x]$, and the latter is an Azumaya algebra when $[D:F] < \infty$.

Each polynomial $f(x) \in D[x]$ can therefore be written with the powers of $x$ placed on the right-hand side and the coefficients placed on the left-hand side (which is why they are occasionally referred to as ``left polynomials"), $f(x)=\sum_{i=0}^m c_i x^i$.
For any $\lambda \in D$, the substitution map $f(x) \mapsto f(\lambda)$ is defined by $f(\lambda)=\sum_{i=0}^m c_i \lambda^i$. Note that this is not a ring homomorphism from $D[x]$ to $D$.
There are however several motivations to study the behavior under substitution, the main one being that $f(x)$ decomposes as $g(x)(x-\lambda)$ if and only if $f(\lambda)=0$, i.e., $(x-\lambda)$ is a linear right factor of $f(x)$ if and only if $\lambda$ is a root of $f(x)$.
This motivated the study of roots of such polynomials. For a complete treatment, see \cite{ChapmanMachen:2017} (and also \cite{LamLeroyOzturk2008}).

We write $f^t(x)$ for the product $\underbrace{f(x)\cdot \ldots\cdot f(x)}_{t \ \text{times}}$ in $D[x]$ for any $t \in \mathbb{N}$.

The substitution map extends to any element from $D[x]$ in a similar way: if $f(x)=\sum_{i=0}^m c_i x^i$ and $g(x)$ are polynomials in $D[x]$, then $f(g(x))=\sum_{i=0}^m c_i (g(x))^i$, which is again a polynomial in $D[x]$.
We define $f^{\circ n}(x)$ recursively by $f(f^{\circ (n-1)}(x))$ for any $n \geq 2$, with $f^{\circ 1}(x)$ being $f(x)$. 

Note that not only is this substitution not associative, it is not even power-associative. For example, $f(x)=ax^2+b$ for general $a,b\in D$ does not satisfy $f(f^{\circ 2}(x))=f^{\circ 2}(f(x))$. It does not associate with substitutions of elements from $D$ either, for example if $f(x)=i x^2 \in \mathbb{H}$, then $f(j+1)=2ij$ and $f(2ij)=-4i$, whereas $f^{\circ 2}(x)=-i x^4$, and so $f^{\circ 2}(j+1)=4i$; we conclude $f^{\circ 2}(\lambda)$ is in general not equal to $f(f(\lambda))$ for $\lambda\in{D}$, even though $f^{\circ 2}(x)=f(f(x))$.

\section{Orbits, periodic points and fixed points}\label{Fixed}

Given a polynomial $f(x) \in D[x]$ where $D$ is a division ring, and $\lambda \in D$, the orbit of $\lambda$ under $f(x)$ is the sequence $\{f^{\circ n}(\lambda) : n \in \mathbb{N}\}$. We say that $\lambda$ commutes with its orbit if it commutes with each element in that sequence.
The point $\lambda$ is called ``$r$-periodic" if $f^{\circ (nr)}(\lambda)=\lambda$ for any $n \in \mathbb{N}$.
A fixed point is 1-periodic.

\begin{rem}
Note that if $\lambda$ is a fixed point of $f(x) \in D[x]$, then $\lambda$ commutes with its orbit, because the only element in the orbit is $\lambda$.
\end{rem}

Unfortunately, having $f^{\circ r}(\lambda)=\lambda$ in general does not imply that $f^{\circ(n r)}(\lambda)=\lambda$ for any $n \in \mathbb{N}$, as the following example shows:

\begin{exmpl}
Take $D=\mathbb{H}$ and $f(x)=x^2+(i+1)x+1+ij$ and the point
$$\lambda=-1 + \left(\frac{133}{362}\sqrt{5} - \frac{333}{362}\right)i -\left(\frac{14}{181}\sqrt{5} + \frac{165}{181}\right)j - \left(\frac{26}{181}\sqrt{5} + \frac{22}{181}\right)ij.$$
Then $f^{\circ 2}(\lambda)=\lambda$, but $f^{\circ 4}(\lambda) \neq \lambda$.
\end{exmpl}

We will prove below, however, that the implication is correct when we assume in addition that $f^{\circ t}(\lambda)$ commutes with $\lambda$ for any $1 \leq t <r$.

\begin{lem}
Let $D$ be a division ring, $f(x),g(x) \in D[x]$ and $\lambda\in D$. If $g(\lambda)$ commutes with $\lambda$, then $h(\lambda)=f(\lambda)\cdot g(\lambda)$ where $h(x)=f(x)g(x)$.
\end{lem}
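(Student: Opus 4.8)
The plan is to expand both sides in terms of coefficients and exploit the centrality of $x$ in $D[x]$ together with the hypothesis that $g(\lambda)$ commutes with $\lambda$. Write $f(x)=\sum_{i=0}^m c_ix^i$ and $g(x)=\sum_{j=0}^n b_jx^j$ with $c_i,b_j\in D$. Since $x$ is central in $D[x]$, we may slide each $x^i$ past $b_j$, so that $h(x)=f(x)g(x)=\sum_{i,j}c_i x^i b_j x^j=\sum_{i,j}c_ib_jx^{i+j}$; collecting like powers of $x$ and applying the substitution map gives $h(\lambda)=\sum_{i,j}c_ib_j\lambda^{i+j}$.

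The key observation is that if $g(\lambda)$ commutes with $\lambda$, then $g(\lambda)$ commutes with every power $\lambda^i$ (a trivial induction on $i$), and hence $\lambda^i g(\lambda)=g(\lambda)\lambda^i=\big(\sum_j b_j\lambda^j\big)\lambda^i=\sum_j b_j\lambda^{i+j}$ for each $i\ge 0$. Multiplying this identity on the left by $c_i$ and summing over $i$, and using that right multiplication by $g(\lambda)$ distributes over the sum defining $f(\lambda)$, yields
$$f(\lambda)\cdot g(\lambda)=\Big(\sum_i c_i\lambda^i\Big)g(\lambda)=\sum_i c_i\big(\lambda^i g(\lambda)\big)=\sum_{i,j}c_ib_j\lambda^{i+j}=h(\lambda),$$
which is exactly the assertion.

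I do not expect a real obstacle here; the content is entirely in pinpointing where noncommutativity is used. Without the hypothesis, the naive expansion of $f(\lambda)g(\lambda)$ produces terms $c_i\lambda^i b_j\lambda^j$ in which $\lambda^i$ cannot be moved past $b_j$, so they need not reassemble into $\sum_{i,j}c_ib_j\lambda^{i+j}$; the assumption that $g(\lambda)$ commutes with $\lambda$ is precisely what lets us move the whole block $\lambda^i$ across $g(\lambda)$ at once. The centrality of $x$ in $D[x]$ is what makes the closed form for $h(x)$ (and thus for $h(\lambda)$) clean, and everything else is bookkeeping.
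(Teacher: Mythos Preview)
Your argument is correct and essentially identical to the paper's: both expand $h(x)$ using the centrality of $x$ to get $h(\lambda)=\sum_{i,j}c_ib_j\lambda^{i+j}$, then use that $g(\lambda)$ commutes with each $\lambda^i$ to rewrite this as $\big(\sum_i c_i\lambda^i\big)g(\lambda)=f(\lambda)g(\lambda)$. The only cosmetic difference is the direction in which the chain of equalities is written.
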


\begin{proof}
Write $f(x)=\sum_{i=0}^m c_i x^i$ and $g(x)=\sum_{j=0}^k d_j x^j$.
Then $h(x)=f(x)\cdot g(x)=\sum_{i=0}^m (c_i x^i)\cdot g(x)=\sum_{i=0}^m \sum_{j=0}^k c_i d_j x^{i+j}$.
Hence, $h(\lambda)=\sum_{i=0}^m \sum_{j=0}^k c_i d_j \lambda^{i+j}\\=\sum_{i=0}^m c_i (\sum_{j=0}^k d_j \lambda^j) \lambda^i=\sum_{i=0}^m c_i (g(\lambda)) \lambda^i=\sum_{i=0}^m c_i \lambda^i g(\lambda)=(\sum_{i=0}^m c_i \lambda^i)g(\lambda)=f(\lambda)\cdot g(\lambda)$.
\end{proof}

\begin{cor}\label{Product}
If $D$ is a division ring, $f(x) \in D[x]$, $\lambda \in D$ and $f(\lambda)$ commutes with $\lambda$, then $f^t(\lambda)=(f(\lambda))^t$ for any $t \in \mathbb{N}$.
\end{cor}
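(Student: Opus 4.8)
The plan is to induct on $t$, invoking the preceding Lemma once at each step. The base case $t=1$ is the tautology $f^1(\lambda)=f(\lambda)=(f(\lambda))^1$, so nothing is needed there.

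For the inductive step, assume $f^{t-1}(\lambda)=(f(\lambda))^{t-1}$. Since $f^t(x)$ is by definition the $t$-fold product of $f(x)$ with itself inside the ring $D[x]$, and multiplication in $D[x]$ is associative, we may factor it as $f^t(x)=f^{t-1}(x)\cdot f(x)$. Now apply the Lemma with the roles of its ``$f$'' and ``$g$'' played by $f^{t-1}$ and $f$ respectively: the only hypothesis required is that $f(\lambda)$ commutes with $\lambda$, which is exactly what is assumed — and, importantly, this hypothesis is fixed and does not vary with $t$. The Lemma then gives $f^t(\lambda)=f^{t-1}(\lambda)\cdot f(\lambda)$, and combining with the induction hypothesis yields $f^t(\lambda)=(f(\lambda))^{t-1}\cdot f(\lambda)=(f(\lambda))^t$, completing the induction.

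The single point that deserves care — and the only place where the non-commutativity of $D$ or the failure of power-associativity for substitution could interfere — is the factorization $f^t(x)=f^{t-1}(x)\cdot f(x)$. This is legitimate because it is an identity about the associative multiplication of the ring $D[x]$, not about the (non-associative) composition operation $\circ$; in particular it does \emph{not} assert anything like $f^{\circ t}(x)=f^{\circ(t-1)}(x)\cdot f(x)$. Once this is observed there is no further obstacle: notably we never need to know that any element of the orbit other than $f(\lambda)$ commutes with $\lambda$, so the lone hypothesis on $f(\lambda)$ suffices throughout.
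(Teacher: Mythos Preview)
Your proof is correct and follows essentially the same inductive strategy as the paper's. The only difference is the order of factorization: you write $f^t(x)=f^{t-1}(x)\cdot f(x)$ and apply the Lemma with $g=f$, so the commutation hypothesis is used directly, whereas the paper writes $f^t(x)=f(x)\cdot f^{t-1}(x)$ and must additionally note that $(f(\lambda))^{t-1}$ commutes with $\lambda$ before invoking the Lemma---a harmless extra step that your ordering avoids.
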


\begin{proof}
By induction on $t$.
Write $g(x)=f^{t-1}(x)$, $h(x)=f(x)g(x)$, and assume $g(\lambda)=(f(\lambda))^{t-1}$, which commutes with $\lambda$.
Then by the previous lemma, $h(\lambda)=f(\lambda)\cdot g(\lambda)=(f(\lambda))^t$.
\end{proof}

In order to facilitate notation, let us denote by $f^{*n}(\lambda)$ the expression \[\underbrace{f(\dots(f(\lambda))\dots)}_{n \ \text{times}}\] for any $n\in\mathbb{N}$ and any $\lambda\in D$. As we mentioned at the end of section~\ref{sec:ring-of-polynomials} , we do not in general have $f^{\circ{n}}(\lambda)=f^{*n}(\lambda)$.
 
\begin{thm}
Let $D$ be a division ring,  $f(x) \in D[x]$, $\lambda \in D$ and $n \in \mathbb{N}$.
Suppose that $\lambda$ commutes with either $\{f^{\circ t}(\lambda) : 1\leq t<n\}$ or $\{f^{*t}(\lambda) : 1\leq t<n\}$. Then $f^{\circ n}(\lambda)=f^{*n}(\lambda)$.
\end{thm}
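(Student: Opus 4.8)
The plan is to run a single induction on $n$ establishing the stronger statement that $f^{\circ t}(\lambda)=f^{*t}(\lambda)$ for \emph{every} $1\le t\le n$, powered by one auxiliary fact: whenever $g(\lambda)$ commutes with $\lambda$, the polynomial identity $h(x)=f(g(x))$ in $D[x]$ is respected by substitution at $\lambda$, that is, $h(\lambda)=f(g(\lambda))$ as an honest substitution of the element $g(\lambda)$ into $f$.

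First I would prove that auxiliary fact. Write $f(x)=\sum_{i=0}^m c_i x^i$, so that $h(x)=\sum_{i=0}^m c_i\,g(x)^i$ in $D[x]$. The substitution map $p(x)\mapsto p(\lambda)$ is additive and satisfies $(c\,p(x))|_{x=\lambda}=c\,p(\lambda)$ for $c\in D$ — both read off directly from the defining formula $\sum_j a_j x^j\mapsto\sum_j a_j\lambda^j$ — so $h(\lambda)=\sum_{i=0}^m c_i\,(g(x)^i)|_{x=\lambda}$. Since $g(\lambda)$ commutes with $\lambda$, \Cref{Product} applied to the polynomial $g$ gives $(g(x)^i)|_{x=\lambda}=(g(\lambda))^i$ for each $i$, whence $h(\lambda)=\sum_i c_i(g(\lambda))^i=f(g(\lambda))$. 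Specializing to $g=f^{\circ(n-1)}$ and $h=f^{\circ n}$ (for $n\ge 2$) yields: if $f^{\circ(n-1)}(\lambda)$ commutes with $\lambda$, then $f^{\circ n}(\lambda)=f(f^{\circ(n-1)}(\lambda))$.

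Then I would run the main induction. For $1\le k\le n$ let $P(k)$ be the assertion ``$f^{\circ t}(\lambda)=f^{*t}(\lambda)$ for all $1\le t\le k$''; the case $k=n$ is the theorem. The base $P(1)$ is immediate since $f^{\circ 1}(\lambda)=f(\lambda)=f^{*1}(\lambda)$ (and for $n=1$ both hypotheses are vacuous, so we are done). For the inductive step, assume $P(k)$ with $k<n$. Then $f^{\circ k}(\lambda)=f^{*k}(\lambda)$, and this element commutes with $\lambda$: under the first form of the hypothesis directly, since $1\le k<n$; under the second form because $f^{\circ k}(\lambda)=f^{*k}(\lambda)$ and $f^{*k}(\lambda)$ commutes with $\lambda$ as $1\le k<n$. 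Feeding this into the auxiliary fact (with $n$ replaced by $k+1$) gives $f^{\circ(k+1)}(\lambda)=f(f^{\circ k}(\lambda))=f(f^{*k}(\lambda))=f^{*(k+1)}(\lambda)$, the last equality being the definition of $f^{*(k+1)}$; together with $P(k)$ this is $P(k+1)$.

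The only genuinely non-formal ingredient is the auxiliary fact, and it is essentially handed to us by \Cref{Product} once one notices it must be applied to the composite polynomial $g=f^{\circ(n-1)}$ rather than to $f$; the hypothesis of \Cref{Product}, namely that $g(\lambda)$ commutes with $\lambda$, is exactly what the induction has arranged at that stage. The remaining point is pure bookkeeping — the two allowed forms of the commutation hypothesis must be threaded through the induction — which is precisely why inducting on the stronger statement $P(k)$, carrying the equalities $f^{\circ t}(\lambda)=f^{*t}(\lambda)$ along, is the right move: it lets the second form be converted into the first one step at a time.
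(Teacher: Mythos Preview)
Your proof is correct and follows essentially the same approach as the paper's: induction on $n$, with the key step being that if $g(\lambda)$ commutes with $\lambda$ then \Cref{Product} yields $(g^i)(\lambda)=(g(\lambda))^i$, whence $f^{\circ n}(\lambda)=f(g(\lambda))$ for $g=f^{\circ(n-1)}$. Your write-up is somewhat more explicit than the paper's in that you strengthen the induction hypothesis to $P(k)$ and spell out how the two alternative commutation hypotheses are threaded through, but the underlying argument is identical.
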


\begin{proof}
We proceed by induction on $n$ (note that under the inductive hypothesis, the two sets $\{f^{\circ t}(\lambda) : 1\leq t<n\}$ and $\{f^{*t}(\lambda) : 1\leq t<n\}$ are equal).
Write $f(x)=\sum_{i=0}^m c_i x^i$ and $g(x)=f^{\circ(n-1)}(x)=\sum_{j=0}^k d_j x^j$, and assume $g(\lambda)=f^{* (n-1)}(\lambda)$.
Then $f^{\circ n}(x)=f(g(x))=\sum_{i=0}^m c_i (g(x))^i=\sum_{i=0}^m c_i g^i(x)$.
By the assumption, $g(\lambda)$ commutes with $\lambda$, and so by Corollary \ref{Product}, $g^i(\lambda)=(g(\lambda))^i$ for every $i \in \mathbb{N}$. Hence 
$f^{\circ n}(\lambda)=\sum_{i=0}^m c_i g^i(\lambda)=\sum_{i=0}^m c_i (g(\lambda))^i=f(g(\lambda))=f^{*n}(\lambda)$.
\end{proof}

\begin{thm} \label{thm:periodic}
Let $D$ be a division ring, $f(x) \in D[x]$, $\lambda \in D$ and $r\in\mathbb{N}$ for which $f^{\circ r}(\lambda)=\lambda$ and $\lambda$ commutes with either $\{f^{\circ t}(\lambda) : 1\leq t<r\}$ or $\{f^{*t}(\lambda) : 1\leq t<r\}$.
Then $\lambda$ is an $r$-periodic point of $f(x)$ that commutes with its orbit.
\end{thm}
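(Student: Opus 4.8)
The plan is to push everything through the genuine iterated substitution $f^{*\bullet}$, which --- unlike the formal iterate $f^{\circ\bullet}$ --- is literally the $n$-fold composition of the map $\phi\colon D\to D$, $\mu\mapsto f(\mu)$, and is therefore associative. First I would reconcile the two alternative hypotheses: whichever of $\{f^{\circ t}(\lambda):1\le t<r\}$ or $\{f^{*t}(\lambda):1\le t<r\}$ is assumed to commute with $\lambda$, I apply the previous theorem in turn with $n=1,2,\dots,r$ --- at each stage the commuting hypothesis for indices $<r$ restricts to one for indices $<n$ --- and obtain $f^{\circ t}(\lambda)=f^{*t}(\lambda)$ for all $1\le t\le r$. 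In particular $f^{*r}(\lambda)=f^{\circ r}(\lambda)=\lambda$, and $\lambda$ commutes with $f^{*t}(\lambda)$ for every $1\le t<r$ (when the starred version is the hypothesis this is immediate; when the circle version is the hypothesis it follows from the identification just made).

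Next I would use associativity of the starred iteration: since $f^{*n}=\phi^{\circ n}$ we have $f^{*(a+b)}(\lambda)=f^{*a}(f^{*b}(\lambda))$, so $f^{*r}(\lambda)=\lambda$ yields $f^{*(nr)}(\lambda)=\lambda$ for all $n$ by a one-line induction, and more generally $f^{*(t+r)}(\lambda)=f^{*t}(\lambda)$. Hence the sequence $\lambda=f^{*0}(\lambda),\,f^{*1}(\lambda),\,f^{*2}(\lambda),\dots$ is periodic of period $r$ and takes all its values in the finite set $\{\lambda,f^{*1}(\lambda),\dots,f^{*(r-1)}(\lambda)\}$, every element of which commutes with $\lambda$ by the first step. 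Therefore $\lambda$ commutes with $f^{*t}(\lambda)$ for \emph{every} $t\ge 0$.

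Finally I would feed this back into the previous theorem once more: for each $n\in\mathbb{N}$, $\lambda$ commutes with $\{f^{*t}(\lambda):1\le t<n\}$, so $f^{\circ n}(\lambda)=f^{*n}(\lambda)$; specializing to multiples of $r$ gives $f^{\circ(nr)}(\lambda)=f^{*(nr)}(\lambda)=\lambda$, which is exactly $r$-periodicity, and since the orbit $\{f^{\circ n}(\lambda):n\in\mathbb{N}\}$ thereby coincides with $\{f^{*n}(\lambda):n\in\mathbb{N}\}$, the second step shows $\lambda$ commutes with its orbit.

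The only step that is not pure bookkeeping is the middle one: the point is precisely that the starred iteration, being ordinary function composition, is associative and hence genuinely periodic the moment $f^{*r}(\lambda)=\lambda$ holds, and this is what lets the extra commuting hypothesis propagate from the first $r-1$ iterates to all of them --- without it the sequence $f^{\circ t}(\lambda)$ need not be periodic at all, as the example preceding the lemmas shows.
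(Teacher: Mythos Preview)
Your proposal is correct and follows essentially the same route as the paper's proof: identify $f^{\circ t}(\lambda)$ with $f^{*t}(\lambda)$ for $t\le r$ via the previous theorem, use the associativity of the starred iteration to propagate periodicity and the commuting condition to all $t$, then invoke the previous theorem once more to transfer everything back to the circle iteration. Your write-up is in fact more careful than the paper's in explicitly reconciling the two alternative hypotheses and in spelling out why the starred iteration is genuinely periodic.
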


\begin{proof}
By the previous theorem, $f^{\circ t}(\lambda)=f^{*t}(\lambda)$ for all $t \in \{1,\dots,r\}$. Now, since $f^{* r}(\lambda)=f^{\circ r}(\lambda)=\lambda$, we obtain $f^{* (nr+\ell)}(\lambda)=f^{* \ell}(\lambda)$ for any $n \in \mathbb{N}$ and $\ell \in \{0,\dots,r-1\}$. As a result, $\lambda$ commutes with $\{f^{* t}(\lambda) : t \in \mathbb{N}\}$. By the previous theorem, we conclude that $\lambda$ commutes with its orbit and is $r$-periodic.
\end{proof}

\begin{cor} \label{cor:fixed}
Given a division ring $D$, a polynomial $f(x) \in D[x]$ and $\lambda \in D$ for which $f(\lambda)=\lambda$. Then $\lambda$ is a fixed point of $f(x)$.
\end{cor}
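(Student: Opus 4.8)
The plan is to derive this immediately from the preceding theorem by specializing to $r = 1$. With $r = 1$ the hypothesis $f^{\circ r}(\lambda) = \lambda$ reads $f^{\circ 1}(\lambda) = f(\lambda) = \lambda$, which is exactly what is assumed. The commutation hypothesis of that theorem requires $\lambda$ to commute with $\{f^{\circ t}(\lambda) : 1 \leq t < r\}$ (or with the analogous starred set); for $r = 1$ this index set is empty, so the requirement holds vacuously. The theorem then yields that $\lambda$ is a $1$-periodic point of $f(x)$, i.e. $f^{\circ n}(\lambda) = \lambda$ for all $n \in \mathbb{N}$, which is precisely the definition of a fixed point. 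So the proof I would write is one line: apply the previous theorem with $r = 1$, observe that the side condition is over an empty index set and hence automatic, and conclude.

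I do not expect any obstacle here: all of the real work has already been done in the two theorems above, which rest on Corollary \ref{Product} (powers of $f$ evaluated at $\lambda$ behave multiplicatively once $f(\lambda)$ commutes with $\lambda$), on the lemma that substitution is multiplicative when the second factor's value commutes with $\lambda$, and on the induction controlling the discrepancy between $f^{\circ n}$ and $f^{*n}$. In the fixed-point case the orbit is just the single point $\lambda$, with which $\lambda$ trivially commutes, so every subtlety evaporates. If a self-contained argument were preferred, one could instead induct directly on $n$: assuming $g(x) := f^{\circ(n-1)}(x)$ satisfies $g(\lambda) = \lambda$, the identity $f^{\circ n}(x) = f(g(x)) = \sum_i c_i\, g(x)^i$ together with $g(\lambda) = \lambda$ (which commutes with $\lambda$) and Corollary \ref{Product} gives $f^{\circ n}(\lambda) = \sum_i c_i\, g(\lambda)^i = \sum_i c_i \lambda^i = f(\lambda) = \lambda$.

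Finally, I would add a remark stressing why this innocuous-looking corollary is the headline of the paper: it shows that the fixed points of $f(x)$ are exactly the roots of $f(x) - x \in D[x]$, so the dynamical question of fixed points collapses to a single polynomial-equation question, for which strong structural results (at most $m$ conjugacy classes of solutions for $f(x)-x$ of degree $m\ge 2$, and explicit descriptions over quaternion and more general central division algebras) are already available in the literature.
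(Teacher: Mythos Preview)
Your proposal is correct and matches the paper's own proof exactly: apply the previous theorem with $r=1$, noting that the commutation hypothesis is vacuous since the index set $\{t : 1 \le t < 1\}$ is empty. The additional self-contained induction you sketch is also correct but not needed here.
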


\begin{proof}
Immediate from the previous theorem, given that $r=1$ in this case, the requirement that $\lambda$ commute with $\{f^{\circ t}(\lambda) : 1\leq t<r\}$ is redundant.
\end{proof}

\begin{cor}
Given a division ring $D$ and a polynomial $f(x) \in D[x]$, the fixed points of $f(x)$ are precisely the roots of $g(x)=f(x)-x$.
\end{cor}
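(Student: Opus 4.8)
The plan is to chain the two directions of the equivalence, invoking the immediately preceding corollary for the substantive direction and using only the definition of a fixed point together with additivity of substitution for the other.

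First I would record the elementary observation that substitution at a point $\lambda\in D$ is additive on $D[x]$: writing $f(x)=\sum_{i=0}^m c_i x^i$, we have $g(x)=f(x)-x=c_0+(c_1-1)x+\sum_{i=2}^m c_i x^i$, so that $g(\lambda)=f(\lambda)-\lambda$ for every $\lambda\in D$. Consequently, $\lambda$ is a root of $g(x)$, i.e.\ $g(\lambda)=0$, if and only if $f(\lambda)=\lambda$. This is the only place any computation is needed, and it is entirely routine since the powers of $x$ sit on the right and the coefficients on the left.

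For the forward direction, suppose $\lambda$ is a fixed point of $f(x)$. By definition $f^{\circ n}(\lambda)=\lambda$ for every $n\in\mathbb{N}$; specializing to $n=1$ gives $f(\lambda)=f^{\circ 1}(\lambda)=\lambda$, and hence $g(\lambda)=f(\lambda)-\lambda=0$ by the previous paragraph, so $\lambda$ is a root of $g(x)$. For the reverse direction, suppose $\lambda$ is a root of $g(x)$. Then $f(\lambda)=\lambda$ by the first paragraph, and the preceding corollary immediately gives that $\lambda$ is a fixed point of $f(x)$. Combining the two inclusions yields the claimed equality of sets.

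There is no genuine obstacle here: the statement is a direct packaging of the preceding corollary together with the trivial fact that the $n=1$ instance of the fixed-point condition is exactly $f(\lambda)=\lambda$. The only points to handle with care are not conflating $f^{\circ n}(\lambda)$ with $f^{*n}(\lambda)$ (irrelevant here, since we only use $n=1$, where the two agree), and remembering that substitution respects addition, so that the roots of $g(x)=f(x)-x$ are precisely the solutions of $f(\lambda)=\lambda$.
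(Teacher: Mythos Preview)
Your proof is correct and follows exactly the intended route: the paper states this corollary without proof, as an immediate consequence of the preceding corollary (that $f(\lambda)=\lambda$ implies $\lambda$ is a fixed point) together with the trivial forward direction obtained by taking $n=1$. Your care in noting additivity of substitution so that $g(\lambda)=f(\lambda)-\lambda$ is the only detail one might want to make explicit, and you have done so.
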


\begin{cor}
Given a division ring $D$ and a polynomial $f(x) \in D[x]$ of degree $m \geq 2$, the fixed points of $f(x)$ belong to at most $m$ conjugacy classes in $D$.
\end{cor}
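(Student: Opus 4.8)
The plan is to combine the previous corollary --- which identifies the fixed points of $f(x)$ with the roots of $g(x)=f(x)-x$ --- with the classical theory of roots of polynomials over division rings, in particular the Gordon--Motzkin type bound asserting that a nonzero polynomial of degree $n$ over a division ring has its roots lying in at most $n$ conjugacy classes (see \cite{ChapmanMachen:2017} and \cite{LamLeroyOzturk2008}). In other words, the corollary does the real work by transporting the dynamical question into the already well-developed theory of standard polynomial equations, and what remains is essentially bookkeeping.

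First I would write $f(x)=\sum_{i=0}^m c_i x^i$ with $c_m\neq 0$, so that $g(x)=f(x)-x=\sum_{i=0}^m c_i' x^i$ with $c_i'=c_i$ for $i\neq 1$ and $c_1'=c_1-1$. Since $m\geq 2$, subtracting $x$ does not affect the leading coefficient $c_m$, and therefore $g(x)$ is a nonzero polynomial of degree exactly $m$; in particular $g\neq 0$, for otherwise $f(x)=x$ would have degree $1$. This is the only point at which the hypothesis $m\geq 2$ enters, and it is genuinely needed: a degree-one polynomial $f(x)=x+c$ with $c\neq 0$ has no fixed points at all, while $f(x)=x$ has every element of $D$ as a fixed point, so neither respects a bound phrased as ``$m$ conjugacy classes'' with $m=1$.

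Then I would apply the Gordon--Motzkin bound with $n=m$ to the polynomial $g(x)$: its roots fall into at most $m$ conjugacy classes of $D$. By the previous corollary these roots are exactly the fixed points of $f(x)$, which completes the argument. I do not expect any serious obstacle here: both ingredients --- the reduction ``fixed point $\iff$ root of $f(x)-x$'' and the conjugacy-class bound for the root set of a degree-$m$ polynomial --- are already available, so the proof is short, with the degree computation above being the only item that requires care.
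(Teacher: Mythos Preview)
Your proposal is correct and matches the paper's intended argument exactly: the paper states this corollary without proof, relying implicitly on the previous corollary (fixed points of $f$ are the roots of $g(x)=f(x)-x$) together with the Gordon--Motzkin bound from the cited references. Your additional remark that $m\geq 2$ ensures $\deg g = m$ (and that the hypothesis is genuinely needed, since $f(x)=x$ would fix every element of $D$) is a useful point of care that the paper leaves implicit.
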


The last corollary is a result of \cite[Theorem 2]{GM} which states that the roots of a polynomial $f(x) \in D[x]$ of degree $n$ group into at most $n$ distinct conjugacy classes. Moreover, \cite[Theorem 4]{GM} states that from each conjugacy class there is either one root or infinitely many, and thus the polynomial $f(x)$ has either infinitely many roots or at most $n$ roots. This means that there is the possibility of having infinitely many distinct fixed points for a standard polynomial over a division ring, something that cannot happen over fields.
For example, the polynomial $f(x)=-x^3 \in \mathbb{H}[x]$ has infinitely many distinct fixed points, including all the elements of the set $\{ai+bj+cij : a^2+b^2+c^2=1\}$. 

Specifically for quaternion algebras $Q$, there is a complete method for solving polynomial equations (see \cite{ChapmanMachen:2017}):
One takes $g(x)=\sum_{i=0}^m c_i x^i$ and computes $C_g(x)=\overline{g(x)} \cdot g(x)$, where $\overline{g(x)}=\sum_{i=0}^m \overline{c_i} x^m$ and $a\mapsto \overline{a}$ stands for the (unique) symplectic involution on $Q$.
The roots of $C_g(x)$ are precisely the conjugacy classes of the roots of $g(x)$.
Therefore, one needs to solve the equation $C_g(x)=0$ over the compositum of the maximal subfields of $Q$.
The roots come in pairs $\lambda,\overline{\lambda}$.
For each such pair, the conjugacy class this pair represents is characterized by $\tr(\lambda)$ and $\norm(\lambda)$.
In order to find the root (or roots) of $g(x)$ from that conjugacy class, one reduces the equation $g(x)=0$ to a linear equation by using the identity $x^2-\tr(\lambda)x+\norm(\lambda)=0$, which holds for all the elements in the conjugacy class of $\lambda$. The linear equation is either trivial (in which case, all the elements from that conjugacy class are roots), or has a unique solution, which is the root of $g(x)$ from that class.

In order to find the fixed points of a polynomial $f(x)$, one needs to carry out this process for $g(x)=f(x)-x$.

\begin{exmpl}
Consider Hamilton's algebra $\mathbb{H}$ of real quaternions, and the polynomial $f(x)=x^2+(i+1)x+1+ij$.
We want to find its fixed points. 
Therefore, we need to find the roots of $g(x)=f(x)-x=x^2+ix+1+ij$.
The polynomial $C_g(x)$ is $x^4+3x^2+2=(x^2+1)(x^2+2)$.
Therefore, the conjugacy classes of its roots are the classes of $i$ and  $\sqrt{2}i$.
The conjugacy class of $i$ is characterized by $x^2+1=0$, and with that, the equation $g(x)=0$ reduces to $-1+ix+1+ij=0$, and so $-ij=ix$, which means $x=-j$ is the unique root of $g(x)$ from that conjugacy class. The second conjugacy class is characterized by $x^2+2=0$, with which the equation $g(x)=0$ reduces to $-2+ix+1+ij=0$, and so $ix=1-ij$, which leaves $x=-i-j$ as the second and last root of $g(x)$.
Thus, the fixed points of $f(x)$ are $-j$ and $-i-j$.
\end{exmpl}

\begin{rem}
If $K$ is a subfield of a division ring $D$, then every periodic point $\lambda \in K$ of $f(x) \in K[x]$ remains a periodic point of $f(x)$ as a polynomial in $D[x]$, because the coefficients of $f(x)$ commute with $\lambda$. For example, $-i$ is a 2-periodic point of $f(x)=x^2+i$ as a polynomial in $\mathbb{H}[x]$.

Note also that even in such a case, there may be fixed points outside of $K$. For example, the polynomial $f(x)=ix^3+(1+i)x$ belongs to $\mathbb{C}[x]$, but as a polynomial in $\mathbb{H}[x]$ it has $j$ as a fixed point.
\end{rem}

\section{Octonion Polynomials}

We conclude the paper with a note on octonion polynomials and fixed points.
Given a field $F$, an octonion algebra $A$ over $F$ is an algebra of the form $A=Q \oplus Q \ell$ where $Q$ is a quaternion $F$-algebra, $\ell^2=\gamma \in F^\times$, and for every $q,r,s,t \in Q$ we have
$$(q+r\ell)(s+t\ell)=qs+\gamma\overline{t}r+(tq+r\overline{s})\ell,$$
where $z\mapsto \overline{z}$ denotes the canonical involution on $Q$.
Taking $F=\mathbb{R}$, $Q=\mathbb{H}$ and $\gamma=-1$, one obtains the classical octonion division algebra over the reals, which is denoted by $\mathbb{O}$.
Octonion algebras satisfy several interesting properties: they are composition algebras (i.e., have a multiplicative quadratic norm form) and alternative (i.e., every two elements generate an associative subalgebra). In fact, they are the largest possible composition algebras by Hurwitz's theorem (\cite[Section 33, Theorem 33.17]{BOI}), and the only possible non-associative alternative division algebras by Kleinfeld's theorem (\cite[Chapter 7, Section 3]{SSSZ}). 
Therefore, it is only natural to ask which properties of associative division rings extend to octonion division algebras as well (see \cite{Chapman:2020}).

The ring of octonion polynomials $A[x]$ is again the scalar extension $A \otimes_F F[x]$, thus $x$ is central in $A[x]$. Moreover, $A[x]$ is an octonion $F[x]$-algebra in the sense of \cite{LoosPeterssonRacine:2008}, and in particular, alternative.
Again, every polynomial can be written as $f(x)=\sum_{i=0}^m c_i x^i$ with the coefficients placed on the left-hand side of the variable, and substitution of $\lambda \in A$ is defined by $f(\lambda)=\sum_{i=0}^m c_i \lambda^i$.
Substituting another polynomial $g(x)$ in $f(x)$ is also given by $f\circ g(x)=f(g(x))=\sum_{i=0}^m c_i(g(x))^i$.
\begin{ques}
Suppose $A$ is an octonion algebra, $f(x) \in A[x]$, $\lambda \in A$, and $f(\lambda)=\lambda$. Does it imply that $f^{\circ n}(\lambda)=\lambda$ for any $n \in \mathbb{N}$?
\end{ques}

The answer is surprisingly negative, as the following example shows:
\begin{exmpl}
Consider $\mathbb{O}$ and the polynomial $f(x)=\ell x^2+(1-i \ell)x+\ell-(i j)\ell$.
Plugging $j$ in $f(x)$ gives $f(j)=j$.
However, $f\circ f(x)=\ell(\ell x^2+(1-i \ell)x+\ell-(i j)\ell)^2+(1-i\ell)(\ell x^2+(1-i \ell)x+\ell-(i j)\ell)+\ell-(i j)\ell=\ell(-x^4+2\ell x^3-2x^2-2i \ell x^2+(2\ell-2(ij)\ell) x-2)+(i+\ell)x^2-2i\ell x+i+j+\ell-(ij)\ell=-\ell x^4-2x^3-(i+\ell)x^2-2(1+ij+i\ell)x+i+j+\ell-(ij)\ell$.
Thus $f\circ f(j)\neq j$.
\end{exmpl}

\section*{Acknowledgements}
The first author acknowledges the receipt of the Chateaubriand Fellowship (969845L) offered by the French Embassy in Israel, and thanks the hospitality of Anne Qu\'{e}guiner-Mathieu, LAGA, and Institut Galil\'{e}e, Universit\'{e} Paris 13, in the fall of 2020, during which a significant part of the work on this paper was done. Both authors thank Patrick Ingram, Boris Kunyavski{\u\i} and the anonymous referee of this article for their helpful comments.
\bibliographystyle{abbrv}

\end{document}